\crefname{hypothesis}{Hypothesis}{Hypotheses}
\title{Improving Greedy Algorithms for Rational Approximation\thanks{Submitted to the editors DATE.
\funding{This work was funded by NSF DMS-2208267}}}
\author{James H. Adler \thanks{Department of Mathematics, Tufts University, Medford, MA 02155, USA
   (\email{James.Adler@tufts.edu}, \email{Xiaozhe.Hu@tufts.edu},
   \email{Zhongqin.Xue@tufts.edu}).}
\and Xiaozhe Hu\footnotemark[2]
\and Xue Wang\thanks{School of Mathematics, Shandong University, Shandong 250100, P. R. China 
    (\email{wangxsdu@mail.sdu.edu.cn}).}
  \and Zhongqin Xue\footnotemark[2]
}
\Crefname{ALC@unique}{Line}{Lines} 
\begin{document}

\maketitle

\begin{abstract}
When developing robust preconditioners for multiphysics problems, fractional functions of the Laplace operator often arise and need to be inverted. Rational approximation in the uniform norm can be used to convert inverting those fractional operators into inverting a series of shifted Laplace operators. Care must be taken in the approximation so that the shifted Laplace operators remain symmetric positive definite, making them better conditioned. 
In this work, we study two greedy algorithms for finding rational approximations to such fractional operators.
The first algorithm improves the orthogonal greedy algorithm discussed in [Li et al., SISC, 2024]  by adding one minimization step in the uniform norm to the procedure. The second approach employs the weak Chebyshev greedy algorithm in the uniform norm.
Both methods yield non-increasing error.
Numerical results confirm the effectiveness of our proposed algorithms, which are also flexible and applicable to other approximation problems. 
Moreover, with effective rational approximations to the fractional operator, the resulting algorithms show good performance in preconditioning a Darcy-Stokes coupled problem.

\end{abstract}

\begin{keywords}
fractional operators; rational approximation; greedy algorithm; preconditioner
\end{keywords}

\begin{MSCcodes} 41A20, 65K10, 65N30, 65F08 

\end{MSCcodes}

\section{Introduction}
Multiphysics problems usually enforce coupling interface conditions in weak form by employing Lagrange multipliers \cite{holter2020robust,layton2002coupling}.
Efficient and robust preconditioners for such systems sometimes involve inverting fractional and/or weighted Laplace operators on the interface \cite{boon2022parameter,boon2022robust,budivsa2022rational}. For instance, for the robust preconditioner derived in \cite{holter2020robust} for the coupled Darcy-Stokes problem, there is one diagonal block of the form
$$\mathcal{S}=\mu^{-1}\left(-\Delta+\mathcal{I}\right)^{-1 / 2}+K \mu^{-1}\left(-\Delta+\mathcal{I}\right)^{1 / 2},$$
where $\Delta$ is the Laplace operator, $\mathcal{I}$ is the identity operator, and $K$ and $\mu$ are physical parameters in the model.
To avoid inverting this block directly and to improve computational efficiency and reduce storage requirements, one approach is to employ rational approximation. This converts inverting the fractional operator into inverting a series of shifted Laplace operators. 

Rational approximation can represent  complex functions in a compact way, especially for those with singularities, and helps in extracting information about a function outside its region. 
Commonly used rational approximation algorithms include the classical Remez algorithm \cite{petrushev2011rational}, the Pad{\'e} approximation \cite{graves1981pade}, the Best Uniform Rational Approximation (BURA) \cite{stahl2003best}, and the barycentric rational interpolation \cite{berrut2005recent}, to name a few. Recently, an orthogonal greedy algorithm (OGA) was applied in \cite{li2024reduced} to approximate $f(z)=z^{-\alpha}$, $\alpha \in(0,1)$ in the $L^2$ norm. 
When it is applied to the fractional diffusion problem, it converts the fractional diffusion operator into a series of shifted Laplace operators, which by construction remain symmetric positive definite (SPD).  Moreover, \cite{hofreither2020unified} shows that the associated $L^2$ error estimate of the rational approximation method for the fractional diffusion problem relates to the error in the uniform norm.  Motivated by this observation,  we aim to approximate general fractional operators directly in the uniform norm by means of greedy algorithms, while ensuring that the proposed methods always yield SPD operators.  

To this end, we propose two improvements on the OGA presented in \cite{li2024reduced}. First, we modify the OGA by including one extra step to minimize the error in the uniform norm. 
Second, we implement a weak Chebyshev greedy algorithm (WCGA) using the uniform norm. For both algorithms, it is shown that the uniform error is monotonically non-increasing.
The effectiveness and flexibility of the proposed greedy algorithms for rational approximation are demonstrated via some basic tests.
We further apply the proposed algorithms to construct preconditioners for a Darcy-Stokes problem.

The structure of this paper is as follows. In  Section \ref{sec:overview}, we provide a brief review of rational approximation and the general OGA and WCGA.
In Section \ref{sec:main}, we describe the improved OGA and the WCGA for approximating with rational functions in the uniform norm.
Numerical experiments are presented in Section \ref{sec:num} to demonstrate the  effectiveness and feasibility of the proposed algorithms, and concluding remarks are given in Section \ref{sec:conc}.


\section{Preliminaries}\label{sec:overview} In this section we recall the concept of rational approximation as well as two greedy algorithms: OGA and WCGA. 
\subsection{Rational approximation} To start, we consider a general case of a fractional function of a SPD operator $\mathcal{A}$, denoted as $f(\mathcal{A})$. The key idea is to find a rational function $R(z)$ to approximate $f(z)$,
$$f(z)\approx R(z):=\frac{P_m(z)}{Q_n(z)},$$
where $P_m(z)$, $Q_n(z)$ are polynomials of degree $m$ and $n$, respectively. If $m\leq n$, we convert the rational function $R(z)$ into its partial fraction decomposition, i.e.,
\begin{align}\label{partial}
    f(z)\approx R(z)=c_0+\sum_{j=1}^n \frac{c_j}{z-p_j},
\end{align}
where $c_j$ and $p_j$ denote the residues and poles, respectively. Then, for given $\bm{w}$, the rational approximation is used to calculate $f(\mathcal{A})\bm{w}$ as
$$f(\mathcal{A})\bm{w}=c_0\bm{w}+\sum_{j=1}^n c_i\left(\mathcal{A}-p_j \mathcal{I}\right)^{-1}\bm{w}.$$
Particularly, if $\mathcal{A}=-\Delta$, the problem reduces to solving a series of shifted Laplacian problems, i.e.,  $\bm{u}=\left(\mathcal{A}-p_j \mathcal{I}\right)^{-1}\bm{w}$. Having negative poles ensures that the shifted operator remains SPD, yielding better conditioned matrices, especially for smaller values of $p_j$. Thus, in all our proposed methods, we try to ensure that the rational approximation yields negative poles.

There have been several applications in the literature of utilizing rational approximation for efficiently solving fractional diffusion problems of the form, $\mathcal{A}^\alpha \bm{u}= \bm{w}$, $\alpha \in(0,1)$.
One approach is BURA, which is based on a modified Remez algorithm \cite{harizanov2020analysis,harizanov2018optimal}. To avoid the blow up at the origin, the BURA method considers the best rational approximation, $\hat{R}(z)$, for the function $z^{1-\alpha}$, and then takes $R(z)=\hat{R}(z)/z$ as the rational approximation to $z^{-\alpha}$. Bonito and Pasciak \cite{bonito2015numerical} investigate rational approximations based on the integral representation 
of $z^{-\alpha}$. 
Applying these rational approximation methods to more general functions is inherently complex and not straightforward, and there is no guarantee that the poles are negative.
Recently, \cite{nakatsukasa2018aaa} proposes an Adaptive Antoulas–Anderson (AAA) algorithm for general $f(z)$, which employs greedy selected support points and makes use of a barycentric form to represent rational functions. However, in certain situations, the AAA algorithm produces positive or pairs of complex conjugate poles \cite{budivsa2022rational}. We refer readers to \cite{hofreither2020unified} and the references therein for more details and an overview of the existing numerical methods for rational approximation. 

\subsection{Greedy algorithms} 
Various types of greedy algorithms have been  studied and applied in combinatorial optimization \cite{cerrone2017carousel,gorski2012greedy}, machine learning \cite{lin2013learning,sahin2023greedy}, and graph theory \cite{bal2018greedy}.
Greedy algorithms aim to make the ``best" decision at each step of the process, possibly leading to a loss of global optimality. 
In the current paper, we explore two types of greedy algorithms: the OGA in Hilbert spaces and the WCGA in Banach spaces \cite{temlyakov2011greedy}. 

\subsubsection{Orthogonal greedy algorithm} Let $H$ be a Hilbert space equipped with an inner product $(\cdot, \cdot)$ and norm $\|\cdot\|$, and let $f(z)\in H$ be the target function we aim to approximate. Let $\mathcal{D}=\{g\}_{g \in \mathcal{D}}$ be a given dictionary for the greedy algorithm. 
Define $\varphi_k(z)$ as the approximation and $r_k(z):=f(z)-\varphi_k(z)$ as the residual at the $k$-th iteration. Initially, set $\varphi_0=0$, $r_0=f$, then the OGA is defined in an inductive manner in three steps, see Algorithm \ref{OGA}.

\begin{algorithm}[h!]
\caption{Orthogonal Greedy Algorithm}
\begin{algorithmic}\label{OGA}
\STATE{Input: $\mathcal{D}$, target function $f$, and an integer $n$}
\STATE{Initialize: Set $\varphi_0=0$ and $r_0=f$}
\FOR{$k=1: n$} 
\STATE{Greedy step: $g_k=\underset{g \in \mathcal{D}}{\operatorname{argmax}} \left( r_{k-1},g \right)$.}
\STATE{Orthogonal projection: $\varphi_{k}:=P_{g_1, \cdots, g_{k}} f=\sum_{i=1}^n c_ig_i.$}
\STATE{Set $r_k:=f-\varphi_k$.}
\ENDFOR  
\end{algorithmic} \end{algorithm}
\noindent First, we perform a greedy step to find an element $g_{k} \in \mathcal{D}$ that satisfies 
$$g_k=\underset{g \in \mathcal{D}}{\operatorname{argmax}} \left( r_{k-1},g\right).$$
Next, we take an orthogonal projection of $f$ onto the space spanned by $g_1, g_2, \ldots, g_k$, $P_{g_1, \cdots, g_k} f$ to obtain the next iteration of the approximation $\varphi_k$.
Finally, the residual is updated accordingly. 
If we intend to employ OGA to get the rational approximation for $f$, it is sufficient to choose an appropriate dictionary. Then, $\varphi_k$ will align with the desired rational approximation $R(z)$.




\subsubsection{Weak Chebyshev greedy algorithm} The WCGA, originally developed and analyzed in  \cite{temlyakov2001greedy}, is a generalization of the weak orthogonal greedy algorithm \cite{temlyakov2000weak} adapted for Banach spaces.
It allows a certain degree of deviation from optimal choice and more flexibility in constructing a weak greedy approximation.
A detailed discussion of the convergence and the rate of convergence for WCGA is presented in \cite{temlyakov2000weak}.  
We consider a Banach space $X$ with a norm $\|\cdot\|_{X}$. For $u$ in $X$ and $v$ in the dual space $X^\prime$, the duality pairing $\langle v, u\rangle$ is defined as $\langle v, u\rangle=v(u).$
For any nonzero element, $u\in X$, we denote the norming functional of $u$ by $F_u$, which satisfies
\begin{align}\label{norming}
    \left\|F_u\right\|_X=1,\quad \langle F_u , u \rangle=\|u\|_X.
\end{align}
Here, the norming functional is not unique, and its existence is guaranteed by the Hahn-Banach theorem.
Let $f\in X$ be the target function and $\mathcal{D}=\{g\}_{g \in \mathcal{D}}$ be the given dictionary. 
Define $\varphi_k$ as the approximation and $r_k:=f-\varphi_k$ as the residual at the $k$-th iteration. 
We set $\varphi_0=0$ and $r_0:=f$. Let 
$\left\{t_k\right\}_{k=1}^{\infty} \subset(0,1]$ be a given sequence, which satisfies certain properties (see \cite{temlyakov2011greedy} for more details). The inductive definition of the WCGA consists of three step, see Algorithm \ref{WCGAoriginal}.
\begin{algorithm}[h!]
\caption{Weak Chebyshev Greedy Algorithm}
\begin{algorithmic}\label{WCGAoriginal}
\STATE{Input: $\mathcal{D}$, target function $f$, sequence $\left\{t_k\right\}_{k=1}^{\infty}$}, and an integer $n$
\STATE{Initialize: Set $\varphi_0=0$  and $r_0=f$}
\FOR{$k=1: n$} 
\STATE{Greedy step: Find $g_k\in \mathcal{D}$ such that
$\left\langle F_{r_{k-1}},g_k \right\rangle \geq t_k\sup_{g \in \mathcal{D}}\left\langle F_{r_{k-1}},g\right\rangle$.}
\STATE{Best approximation: Find $\varphi_k \in X_k:=\operatorname{span}\left\{g_j\right\}_{j=1}^k$ such that}
$$
\varphi_k=\underset{\varphi \in X_k}{\operatorname{argmin}}\| f - \varphi \|_X.$$
\STATE{Set $r_k:=f-\varphi_k$.}
\ENDFOR  
\end{algorithmic} \end{algorithm}

\noindent First, in the greedy step, we need to find an element $g_{k} \in \mathcal{D}$ satisfying the condition
$$\left\langle F_{r_{k-1}},g_k \right\rangle \geq t_k\sup_{g \in \mathcal{D}}\left\langle F_{r_{k-1}},g\right\rangle.$$
Here, the sequence $\left\{t_n\right\}_{n=1}^{\infty}$ acts as a slack term that provides the algorithm more flexibility at each greedy step.  
Next, the new approximation $\varphi_k$ is the best approximation for $f$ from the space spanned by $g_1, \cdots, g_k$. In the final step, we update the
residual.




\section{Improved Greedy Algorithms}\label{sec:main}
To efficiently solve fractional elliptic problems of the form $\mathcal{A}^\alpha u= b$, $\alpha \in(0,1)$, where $\mathcal{A}$ is SPD, Li et al. \cite{li2024reduced} employed OGA to approximate $f(z)=z^{-\alpha}$ on $[\epsilon,1]$ in the $L^2$ norm. 
With the fact that the poles for the best uniform approximation of the given $f(z)$ are all simple and negative \cite{stahl2003best}, the authors construct the following dictionary guaranteeing that the OGA always generates negative poles,
$$
\mathcal{D}=\mathcal{D}(P)=\left\{g(z):=\left(\frac{1}{\varepsilon-p}-\frac{1}{1-p}\right)^{-\frac{1}{2}} \frac{1}{z-p}\right\}_{p \in P},
$$
where $P$ is reliant on the spectrum of $\mathcal{A}$. For details on error estimates in the $L^2$ norm see \cite{Barron2008greedy,devore1996some,li2023entropy}. Note that the $L^2$ error of the rational approximation method for the fractional diffusion problem relies on the uniform error of the rational approximation in the following way,
\begin{align*}
  \left\|f(\mathcal{A}) \bm{b}-R\left(\mathcal{A}\right) \bm{b}\right\|_{L^2} \leq\max _{z \in\left[\lambda_{\min }, \lambda_{\max }\right]}\left|f(z)-R(z)\right|\|\bm{b}\|_{L^2},  
\end{align*}
where $R(\cdot)$ is the rational approxiamtion for $f$, and $\lambda_{\min }$ and $\lambda_{\max }$ are the minimum and maximum eigenvalues of $\mathcal{A}$ \cite{hofreither2020unified}.  This inspires us to develop greedy algorithms for the rational approximation directly in the uniform norm. 

In the present work, our discussion extends beyond functions of the form $z^{-\alpha}$. We consider a generalized function $f(\mathcal{A})$, where $\mathcal{A}$ is SPD. Thus, the problem becomes applying OGA and WCGA to find good rational approximations for a continuous function $f(z)$ on $[a, b]$ with $a\ge0$. 
The associated uniform norm of \( f \) is defined as $ 
\|f\|_{\infty} = \sup_{z \in [a, b]} |f(z)|.$

\subsection{Algorithm 1: Improved Orthogonal Greedy Algorithm}
Let \( u, v \in L^2(\Omega) \), where \(\Omega \subset \mathbb{R}\). The \(L^2\) inner product is defined as $\left( v, u \right) = \int_{\Omega} u(z) v(z) \, dz.$
Then, the \(L^2\) norm of a function \( u \in L^2(\Omega) \) is defined as $
\|f\| = \sqrt{\left( u, u \right)}.$ To get the partial fraction decomposition \eqref{partial} in order to approximate $f(z)$ on $[a, b]$, we take into account the following normlized dictionary $\mathcal{D}$ which consists of a class of functions $g(z)$ with $\|g\|=1$,
$$
\mathcal{D}(P)=\left\{g(z):=\left(\frac{1}{a-p}-\frac{1}{b-p}\right)^{-\frac{1}{2}} \frac{1}{z-p}\right\}_{p \in P},
$$
where $P:=[P^{\text{left}},P^{\text{right}}]$ depends on the spectrum of $\mathcal{A}$. 
See Algorithm \ref{improvedOGA} for the details of the improved OGA. 

\begin{algorithm}[h!]
\caption{Improved Orthogonal Greedy Algorithm}
\begin{algorithmic}\label{improvedOGA}
\STATE{Input: $a$, $b$, $\mathcal{D}(P)$, target function $f$, and an integer $n$}
\STATE{Initialize: Set $\varphi_0=0$ and $r_0=f$}
\FOR{$k=1: n$} 
\STATE{Greedy step: $g_k=\underset{g \in \mathcal{D}(P)}{\operatorname{argmax}} \left( r_{k-1},g \right)$.}
\STATE{Orthogonal projection: $\varphi_{k}:=P_{g_1, \cdots, g_{k}} f=\sum_{i=1}^k c_ig_i.$}
\STATE{Set $r_k:=f-\varphi_k$.}
\ENDFOR  
\STATE{Best approximation in the uniform norm: 
Using $\{c_i\}_{i=1}^n$ as a initial guess, find $\varphi_n^* \in H_n:=\operatorname{span}\left\{g_j\right\}_{j=1}^n$ such that}
$$
\varphi_n^*=\underset{\varphi \in H_n}{\operatorname{argmin}}\| f - \varphi \|_{\infty}.$$
\end{algorithmic} \end{algorithm}
\begin{remark}
    At the greedy step, the authors of \cite{li2024reduced} determine the ``optimal" value of the pole by performing an exhaustive search over a discretized interval for $P$. In our algorithm, a particle swarm optimization \cite{wang2018particle} is implemented to look for a pole more accurately, thus providing a better approximation.
\end{remark}

The main novelty of Algorithm \ref{improvedOGA}  lies in incorporating a minimization step after taking an orthogonal projection onto the space spanned by the candidates selected from the original OGA. In addition, we use the coefficient $c_i$, computed through the orthogonal projection, as the initial guess for the best approximation to achieve faster convergence. The extra minimization
step ensures the corresponding uniform error always gets smaller compared with the one from the OGA. Consequently,  $\varphi_n^*$ is a better rational approximation for $f(z)$ compared with the one obtained by the original OGA.

\begin{theorem}\label{OGAth}
The uniform error for the rational approximation obtained by the improved OGA satisfies
$$\| f - \varphi_n^* \|_{\infty}\le\| f - \varphi_n \|_{\infty} \text{ and } \| f - \varphi_n^* \|_{\infty}\le\| f - \varphi_m^* \|_{\infty}\text{ for $n\ge m $},$$ 
where $\varphi_n$ is the rational approxiation for $f$ using the OGA with input integer $n$, and $\varphi_n^*$ is the one computed using the improved OGA.
\end{theorem}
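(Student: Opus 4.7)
The plan is to derive both inequalities purely from the nestedness of the subspaces $H_k := \operatorname{span}\{g_1,\dots,g_k\}$ together with the defining minimization property of $\varphi_n^*$. No approximation-theoretic estimates are actually required; both claims reduce to ``the infimum of a fixed functional over a larger set is no larger than over a smaller set.''

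First, I would dispatch the inequality $\|f - \varphi_n^*\|_\infty \le \|f - \varphi_n\|_\infty$. By construction in Algorithm \ref{improvedOGA}, the standard OGA iterate $\varphi_n = \sum_{i=1}^n c_i g_i$ is an element of $H_n$. On the other hand, $\varphi_n^*$ is defined as $\operatorname{argmin}_{\varphi \in H_n}\|f - \varphi\|_\infty$. Specializing the minimization to the admissible point $\varphi_n \in H_n$ immediately yields the bound, so this half is essentially by definition. This is also where the remark about using $\{c_i\}$ as the initial guess pays off: even a single step of the minimizer cannot worsen the uniform error starting from a feasible point.

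Next, for the monotonicity $\|f - \varphi_n^*\|_\infty \le \|f - \varphi_m^*\|_\infty$ when $n \ge m$, the key observation is that the inner for-loop of Algorithm \ref{improvedOGA} is exactly the original OGA and does \emph{not} depend on the terminal uniform-norm minimization. Concretely, the greedy selection $g_k = \operatorname{argmax}_{g\in\mathcal{D}(P)}(r_{k-1},g)$ and the orthogonal projection update $\varphi_k = P_{g_1,\dots,g_k}f$ depend only on $g_1,\dots,g_k$ and on $f$. Therefore, running the improved OGA with terminal index $m$ or with terminal index $n\ge m$ produces the same first $m$ dictionary elements $g_1,\dots,g_m$, and hence $H_m \subset H_n$. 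Since $\varphi_n^*$ and $\varphi_m^*$ are respectively the uniform-norm best approximations of $f$ from $H_n$ and from $H_m$, restricting the minimization over $H_n$ to $H_m$ gives
\begin{equation*}
\|f - \varphi_n^*\|_\infty = \inf_{\varphi \in H_n}\|f - \varphi\|_\infty \le \inf_{\varphi \in H_m}\|f - \varphi\|_\infty = \|f - \varphi_m^*\|_\infty,
\end{equation*}
which completes the argument.

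The only delicate point, and thus the main ``obstacle,'' is purely bookkeeping: one must verify that the greedy selections are indeed shared between the two runs of the algorithm, i.e., that the inner OGA loop is insensitive to the final uniform-norm minimization step. Once this is noted, the entire theorem follows from the trivial monotonicity of infima over nested sets, and no convergence rate or regularity theory for greedy algorithms is invoked. I would also briefly comment that well-definedness of $\varphi_n^*$ (existence of a uniform-norm minimizer over the finite-dimensional subspace $H_n$) is standard and poses no issue.
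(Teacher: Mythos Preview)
Your proposal is correct and follows essentially the same approach as the paper: both arguments rest on the minimization property of $\varphi_n^*$ over $H_n$ together with the nestedness $H_m\subset H_n$, with the paper arguing the monotonicity recursively ($n-1\to n$) while you go directly from $m$ to $n$. Your explicit verification that the greedy selections $g_1,\dots,g_m$ are independent of the terminal index is a point the paper uses implicitly but does not spell out, so your bookkeeping is, if anything, slightly more careful.
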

\begin{proof}
    Suppose the approximation, $\varphi_n$, by the OGA with input integer $n$ is of the following form
    $$\varphi_n=\sum_{i=1}^n c_ig_i.$$
    Since the best approximation $\varphi_n^*$ of $f$ for the improved OGA is obtained by using $\{c_i\}_{i=1}^n$ as an initial guess, we have $\| f -\varphi_n^* \|_{\infty}\le\| f - \varphi_n \|_{\infty}$.
    
    Similarly, if the best approximation $\varphi_{n-1}^*$ for the improved OGA with input interger $n-1$ is expressed by
    $$\varphi_{n-1}^*=\sum_{i=1}^{n-1} \Tilde{c}_i^{(n-1)}g_i,$$
    then, by setting $d_i^{(n)}=\Tilde{c}_i^{(n-1)}$ for $i=1,2,\cdots,n-1$ and $d_k^{(n)}=0$, we arrive at 
    $$\| f - \varphi_{n}^* \|_{\infty}\le\| f - \sum_{i=1}^{n} d_i^{(n)}g_i\|_{\infty}\le
    \| f - \varphi_{n-1}^* \|_{\infty}.$$
    Then, the desired results are obtained recursively.
\end{proof}

\subsection{Algorithm 2: Weak Chebyshev Greedy Algorithm in the Uniform Norm}  
Given the weaker assumptions of the WCGA, we are able to find a more practical implementation.  Here, we directly apply the WCGA in Algorithm 2.2 with the Banach space, $X = C(\Omega)$, where $\Omega\subset\mathbb{R}$ is a compact set.
Denote the dual space of \( C(\Omega) \) as \( C^\prime(\Omega) \).

Let $\left\{t_k\right\}_{k=1}^{\infty}$ with $0< t_k \leq 1$ be a given  sequence. We define the dictionary,
$$
\mathcal{D}(P)=\left\{g(z):=\frac{1}{z-p}\right\}_{p \in P},
$$
where $P:=[P^{\text{left}},P^{\text{right}}]$ depends on the spectrum of the operator $\mathcal{A}$. 
For any nonzero element, $v\in \mathcal{D}(P)$, the norming functional of $u$, denoted by $F_u$ is defined as: 
$$
\langle F_u,v\rangle = \mathrm{\text{sign}}(u(z^*))v(z^*),\quad\text{with }z^*=\arg \max_{z\in[a,b]} |u(z)|.
$$
One can easily verify that $F_u$ satisfies the conditions in \eqref{norming}, that is
$$
\left\|F_u\right\|_{\infty}=\sup_{v \in \mathcal{D}}\frac{\mathrm{\text{sign}}(u(z^*))v(z^*)}{\left\|v\right\|_{\infty}}=1,
$$
and
$$
\langle F_u,u\rangle=\mathrm{\text{sign}}(u(z^*))u(z^*)=\|u\|_{\infty}.
$$

With this setup, we turn to the crucial part of the WCGA, implementing the greedy step.  Here, the selection of $g_k$ is the process of choosing the pole
at the $k$-th iteration, denoted by $p_k$,  from $P$.
Note that $\left\langle F_{r_{k-1}},g_k \right\rangle \geq t_k\sup_{g \in \mathcal{D}}\left\langle F_{r_{k-1}},g \right\rangle$ is equivalent to
\begin{align*}
    \mathrm{\text{sign}}(r_{k-1}(z_{k-1}^*))\frac{1}{z_{k-1}^*+p_k}\ge t_{k}\sup_{p \in P}\left[\mathrm{\text{sign}}(r_{k-1}(z_{k-1}^*))\frac{1}{z_{k-1}^*+p_k}\right].
\end{align*}
If $\mathrm{\text{sign}}(r_{k-1}(z_{k-1}^*))<0$, we have
\begin{align*}
    \frac{1}{z_{k-1}^*+p_k}&\le t_{k}\frac{1}{z_{k-1}^*+P^{\text{right}}},\\
    p_k&\ge \frac{z_{k-1}^*+P^{\text{right}}}{t_{k}}-z_{k-1}^*.
\end{align*}
Consequently, we define $\Tilde{P}_k(z_{k-1}^*):=[\frac{z_{k-1}^*+P^{\text{right}}}{t_{k}}-z_{k-1}^*,P^{\text{right}}]$ as the domain of possible values for the $k$-th pole. 
On the other hand, if $\mathrm{\text{sign}}(r_{k-1}(z_{k-1}^*))>0$, we have
\begin{align*}
    \frac{1}{z_{k-1}^*+p_k}&\ge t_{k}\frac{1}{z_{k-1}^*+P^{\text{left}}},\\
    p_k&\le \frac{z_{k-1}^*+P^{\text{left}}}{t_{k}}-z_{k-1}^*.
\end{align*}
Then, the domain becomes $\Tilde{P}_k(z_{k-1}^*):=[P^{\text{left}},\frac{z_{k-1}^*+P^{\text{left}}}{t_{k}}-z_{k-1}^*]$. We are able to choose appropriate poles by employing a restart procedure, thereby resulting in an effective implementation of the WCGA.
Finally, $\varphi_n$ serves as the intended rational approximation $R(z)$ for $f(z)$. The overall implementation is presented in Algorithm \ref{alg:restart}.
\begin{algorithm}[h!]
\caption{Implementation of the WCGA in the uniform norm}
\label{alg:restart}
\begin{algorithmic}
\STATE {Input: $a$, $\mathcal{D}(P)$ with $P:=[P^{\text{left}},P^{\text{right}}]$, target function $f$, sequence $\left\{t_k\right\}_{k=1}^{\infty}$, and integers $n$ and $m$}
\STATE{Initialize: $\varphi_0=0$, $r_0=f$ and $z_0=\arg \max_{z\in[a,b]} |r_0(z)|$}
\FOR{$k=1: n$}
\IF{$\mathrm{\text{sign}}(r_{k-1}(z_{k-1}^*))<0$}
        \STATE  $\Tilde{P}^{\text{left}}_k :=\frac{z_{k-1}^*+P^{\text{right}}}{t_{k}}-z_{k-1}^*$ and $\Tilde{P}^{\text{right}}_k :=P^{\text{right}}$
        \ELSE 
        \STATE  $\Tilde{P}^{\text{left}}_k :=P^{\text{left}}$ and $\Tilde{P}^{\text{right}}_k :=\frac{z_{k-1}^*+P^{\text{left}}}{t_{k}}-z_{k-1}^*$
    \ENDIF
\STATE Discretize $\Tilde{P}_k:=[\Tilde{P}^{\text{left}}_k,\Tilde{P}^{\text{right}}_k]$ into $m+1$ points: $\Tilde{P}^i_k = \Tilde{P}^{\text{left}}_k + \frac{i}{m} (\Tilde{P}^{\text{right}}_k - \Tilde{P}^{\text{left}}_k)$ for $i = 0, 1, \ldots, m$
\FOR{$i = 0$ to $m$}
    \STATE{Find $\varphi_k \in X_k:=\operatorname{span}\left\{g_1,\cdots,g_{k-1},g(\Tilde{P}^i_k))\right\}$ such that}
   $$
   \varphi_k=\underset{\varphi \in X_k}{\operatorname{argmin}}\| f - \varphi \|_{\infty}.$$
    \IF{$\| f - \varphi_k\|_{\infty}<\| f - \varphi_{k-1} \|_{\infty}$}
        \STATE Take $g_k=g(\Tilde{P}^i_k)$
        \STATE \textbf{break}
    \ENDIF
\ENDFOR
\STATE{Set $r_k:=f-\varphi_k$ and $z_k^*=\arg \max_{z\in[a,b]} |r_k(z)|$}
\ENDFOR
\end{algorithmic}
\end{algorithm}

\begin{theorem}
    The uniform error for the rational approximation obtained by the WCGA in Algorithm \ref{alg:restart} satisfies 
    \begin{align}\label{WCGAinq}
        \| f - \varphi_k \|_{\infty}\le\| f - \varphi_{k-1} \|_{\infty} \text{ for $k=2,\cdots,n$},
    \end{align}
    where $\varphi_k$ is the approxiation of $f$ at the $k$-iteration using the WCGA.
\end{theorem}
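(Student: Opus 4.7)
The plan is to exploit the fact that the candidate approximation spaces in Algorithm \ref{alg:restart} are always nested. At iteration $k$, for every candidate index $i \in \{0, 1, \dots, m\}$, the space
$$X_k = \operatorname{span}\{g_1, \dots, g_{k-1}, g(\tilde{P}^i_k)\}$$
contains $X_{k-1} = \operatorname{span}\{g_1, \dots, g_{k-1}\}$ as a subspace, since the previously selected dictionary elements $g_1, \dots, g_{k-1}$ are retained and only one new element is appended. Because $\varphi_{k-1}$ was defined as an element of $X_{k-1}$, it automatically lies in $X_k$.

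The key step is then to invoke the defining property of $\varphi_k$: by construction in the inner loop, $\varphi_k$ is the best approximation of $f$ from $X_k$ in the uniform norm. Hence for any $\varphi \in X_k$, in particular for $\varphi = \varphi_{k-1}$, we have
$$\| f - \varphi_k \|_{\infty} = \min_{\varphi \in X_k} \| f - \varphi \|_{\infty} \le \| f - \varphi_{k-1} \|_{\infty},$$
which is precisely \eqref{WCGAinq}. Note that this bound is candidate-independent: it holds for the approximation associated with any $\tilde{P}^i_k$ and therefore for the one produced by the algorithm, regardless of which $i$ is accepted.

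The only subtlety, and the main point that must be addressed carefully, is the flow of control in the inner FOR loop. The \textbf{break} statement triggers on a strict decrease, so one must rule out the possibility that the loop exits without ever updating $\varphi_k$ beneath the previous level. However, the nested-subspace argument above shows that for every candidate tested the inequality $\le$ is automatic, so even in the degenerate case where the loop runs to $i=m$ without triggering a strict improvement, the final $\varphi_k$ still satisfies $\| f - \varphi_k\|_\infty \le \| f - \varphi_{k-1}\|_\infty$. This is what completes the proof; no facts about the greedy step itself, the sign of $r_{k-1}(z_{k-1}^*)$, or the weakness parameters $t_k$ are needed for monotonicity -- these only affect the rate of decrease, not the monotonicity property established here.
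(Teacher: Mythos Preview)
Your proof is correct and follows essentially the same approach as the paper's: both exploit that $X_{k-1}\subset X_k$ so that $\varphi_{k-1}$ is a feasible point in the minimization defining $\varphi_k$, yielding the monotone inequality regardless of which candidate $g(\tilde P^i_k)$ is ultimately selected. The paper phrases this by explicitly padding the coefficient vector of $\varphi_{k-1}$ with a zero for $g_k$, whereas you state it in subspace language; the content is the same.
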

\begin{proof}
    Let $\varphi_{k-1}$ be the approximation computed at the $(k-1)$-th iteration, given by
    $$\varphi_{k-1}=\sum_{j=1}^{k-1} \Tilde{c}_j^{(k-1)}g_j.$$
     If we can find $\varphi_k$ that 
    satisfies the \textit{if} statement at the $k$-th greedy step, then the inequality \eqref{WCGAinq} strictly holds. If the condition is not met for any $1\le i\le m+1$, then we take $g_k=g(\Tilde{P}^{\text{right}}_k)$.
    By Setting $d_j^{(k)}=\Tilde{c}_j^{(k-1)}$ for $j=1,2,\cdots,k-1$ and $d_k^{(k)}=0$, we have the following estimate:
    $$\| f - \varphi_{k}\|_{\infty}\le\| f - \sum_{j=1}^{k} d_j^{(k)}g_j\|_{\infty}\le
    \| f - \varphi_{k-1} \|_{\infty}.$$
\end{proof}
\section{Numerical results}\label{sec:num} In this section, we provide several experiments to illustrate the performance of the algorithms discussed in the previous section. In the implementation, we use the \textit{fmincon} function in Matlab for computing the best approximation in the uniform norm. Since \textit{fmincon} might find any local minima that satisfies the given objectives, we utilize a deflation technique \cite{tarek2022simplifying} by introducing a nonlinear constraint to ensure that the algorithm escapes from local minima and identifies the global minimum. Moreover, we set the sequence in Algorithm \ref{alg:restart} as $t_k=1/\sqrt{k}$, where larger $k$ gives more flexibility in selecting a candidate for the greedy approximation. In addition, we set $m=100$ for the discretization of $\Tilde{P}_k$.

\subsection{Function approximation} To begin, we investigate the effectiveness and flexibility of the proposed algorithms. 

\textbf{Example 1.} Consider the target function $f(z)=z^{-\frac{1}{2}}$ on $[10^{-6},1]$. This problem was considered by Li et al. \cite{li2024reduced} relating to the fractional diffusion problem $\mathcal{A}^{\frac{1}{2}} \bm{u}= \bm{b}$. Here, we compare the performance of the proposed algorithms with the OGA studied in \cite{li2024reduced} to show the effectiveness of the proposed algorithms in the uniform norm.  The function $f(z)=z^{-\frac{1}{2}}$ goes to infinity as $z$ approaches 0, thus we implement the Algorithm \ref{improvedOGA} over the domain $[10^{-8},1]$ by setting $a=10^{-8}$ and $b=1$, but compute the uniform error over the domain $[10^{-6},1]$ to obtain a more precise approximation. 

\begin{table}[htb!]
 \begin{center}
  \caption{Poles and uniform error of the OGA, improved OGA, and WCGA rational approximation of $z^{-0.5}$.}\label{compareOGA1} 
  \vspace*{0.3pt}
  \def\temptablewidth{0.9\textwidth}
  {\rule{\temptablewidth}{0.5pt}}
  \begin{tabular*}{\temptablewidth}{@{\extracolsep{\fill}}ccccccc}
   \multirow{2}{*}{$j$}  & \multicolumn{2}{c}{OGA} & \multicolumn{2}{c}{improved OGA} &\multicolumn{2}{c}{WCGA}\\ \cline{2-3}\cline{4-5}\cline{6-7}
   \multicolumn{1}{c}{} & $p_j$ & Error & $p_j$ & Error & $p_j$ & Error\\ \hline
 1 & -1.2e-04 & 7.2e+02 & -1.2e-04 & 3.5e+02 & -2.5e-09  & 2.1e+02\\
 2 & -1.7e-01 & 7.6e+02 & -1.7e-01 & 3.2e+02 & -2.5e-06  & 8.0e+01\\
 3 & -2.5e-09 & 4.7e+02 & -2.5e-09 & 5.4e+01 & -1.5e-06  & 5.5e+01\\
 4 & -1.6e-06 & 1.9e+01 & -1.6e-06 & 8.2e+00 & -1.5e-06  & 4.2e+01\\
 5 & -4.1e-03 & 3.9e+01 & -4.1e-03 & 7.9e+00 & -1.4e-05  & 3.0e+01\\
 6 & -2.5e+01 & 4.3e+01 & -2.5e+01 & 7.9e+00 & -5.8e-04  & 5.5e+00\\
 7 & -1.9e-05 & 3.3e+01 & -1.9e-05 & 9.3e-01 & -1.2e-02  & 2.7e+00\\
 8 & -2.6e-02 & 3.4e+01 & -2.6e-02 & 8.2e-01 & -2.7e-06  & 1.6e+00\\
 9 & -8.0e-07 & 2.3e+01 & -8.0e-07 & 6.6e-01 & -4.2e-03  & 1.0e+00\\
10 & -6.5e-04 & 1.8e+01 & -6.5e-04 & 2.6e-01 & -2.4e-02  & 7.8e-01\\
11 & -7.9e-01 & 1.7e+01 & -7.9e-01 & 2.5e-01 & -8.3e-01  & 6.5e-01\\
12 & -1.3e-07 & 1.3e+00 & -1.3e-07 & 7.7e-02 & -2.0e-05  & 2.7e-01\\
  \end{tabular*}
    {\rule{\temptablewidth}{0.5pt}}
 \end{center}
\end{table}

The poles $p_j$ and the error in successive iterative steps associated with the OGA, the improved OGA, and the WCGA are displayed in Table \ref{compareOGA1}. Note that we only adopt the minimization procedure in the uniform norm at the last iteration for the improved OGA. We observe that for the case with the same number of poles, the improved OGA and the WCGA have smaller uniform error compared to the original OGA. Moreover, as expected, the uniform error for the improved OGA and the WCGA is monotonically decreasing. The overall computation time for the improved OGA is 1208.46 seconds, while for the WCGA it is 13.31 seconds. 

\textbf{Example 2.} Our second test case is motivated by the Darcy–Stokes model \cite{budivsa2022rational}, which we discuss further in Section \ref{app:pre}. 
The target function is given by $f(z)=\left(sz^\alpha+  tz^\beta\right)^{-1}$ with $z\in[10^{-6},1]$.
For this case, the AAA algorithm yields a  small uniform error, however, it sometimes generates positive or pairs of complex conjugate poles.
We set $s=0.1, t=1, \alpha=1/2$, and $\beta=-1/2$ in $f(z)$. Comparisons with the OGA, the improved OGA, and WCGA are provided in Table \ref{compareOGA2}. 
As the table clearly shows, the proposed greedy algorithms guarantee negative poles, despite sacrificing some of the accuracy in the rational approximation. Again, we see that the conclusion reached in Example 1 is still valid. Here, the improved OGA requires a total computation time of 454.03 seconds, whereas the WCGA requires 6.50 seconds.

\begin{table}[htb!]
 \begin{center}
  \caption{Poles and uniform error of OGA, improved OGA, and WCGA rational approximation of $(0.1z^{0.5}+z^{-0.5})^{-1}$.}\label{compareOGA2} 
  \vspace*{0.3pt}
  \def\temptablewidth{0.8\textwidth}
  {\rule{\temptablewidth}{0.5pt}}
  \begin{tabular*}{\temptablewidth}{@{\extracolsep{\fill}}ccccccc}
   \multirow{2}{*}{$j$}  & \multicolumn{2}{c}{OGA} & \multicolumn{2}{c}{improved OGA} &\multicolumn{2}{c}{WCGA}\\ \cline{2-3}\cline{4-5}\cline{6-7}
   \multicolumn{1}{c}{} & $p_j$ & Error & $p_j$ & Error & $p_j$ & Error\\ \hline
 1 & -2.5e+01 & 6.4e-01 & -2.5e+01 & 4.6e-01 & -2.5e-09  & 9.1e-01\\
 2 & -2.8e-02 & 5.7e-01 & -2.8e-02 & 2.2e-01 & -4.1e-01  & 7.0e-01\\
 3 & -1.4e-03 & 1.2e+00 & -1.4e-03 & 2.0e-01 & -7.3e-01  & 1.7e-01\\
 4 & -2.5e-01 & 2.6e-01 & -2.5e-01 & 4.8e-02 & -1.0e+00  & 6.9e-02\\
 5 & -6.2e-05 & 5.5e-01 & -6.2e-05 & 4.8e-02 & -3.1e-02 & 4.5e-02\\
 6 & -6.6e-03 & 4.6e-01 & -6.6e-03 & 3.9e-02 & -1.4e+00  & 3.5e-02\\
 7 & -9.9e-01 & 4.7e-02 & -9.9e-01 & 3.8e-03 & -1.6e+00  & 2.2e-02\\
  \end{tabular*}
  {\rule{\temptablewidth}{0.5pt}}
\end{center}
\end{table}

\textbf{Example 3.} To demonstrate that the proposed algorithms are also applicable for other approximations beyond rational approximation, we utilize a ``proper" dictionary for the function $f(z)=(sz^{\alpha}+tz^{\beta})^{-1}$ on $[10^{-6},1]$. Here we take $s=0.1$, $t=1$, $\alpha=0.4$ and $\beta=0.6$.
The dictionary is constructed as
\begin{align}\label{extdic}
    \mathcal{D}:=\left\{g(z):=z^{-\eta}\right\}_{\eta \in(0,1)}.
\end{align}
This is due to the fact that the graphs of the target function and the elements in the dictionary exhibit a similar pattern characterized by the function value increasing sharply when $z$ is close to 0. 
Hence, we aim to approximate $f(z)$ with linear combinations of $z^{-\eta}$ with different values of $\eta$, i.e., $f(z)\approx\sum_{j=1}^{n}c_jz^{-\eta_j}$.

In Table \ref{extended3}, we record the value of $\eta_j$ and the corresponding coefficients $c_j$ for the improved OGA and the WCGA, respectively. The proposed algorithms show good approximations of $f(z)$, where the improved OGA requires 7 terms and the WCGA needs 13 terms for reducing the residual
by $5\times10^{-2}$. 
Specifically, the uniform errors are $2.5\times 10^{-2}$ and $3.9\times 10^{-2}$ for the improved OGA and WCGA, respectively. This implies that in addition to yielding good rational approximations, the proposed greedy algorithms are also effective for other approximation problems. In addition, given the comparable uniform error, the total computation time for the improved OGA is 1350.01 seconds, while the required time for the WCGA is 196.74 seconds, which is approximately 85\% less than the former.


\begin{table}[ht]
\centering
\caption{Approximation of $(0.1z^{0.4}+z^{0.6})^{-1}$ using improved OGA and WCGA with in dictionary \eqref{extdic}}.\label{extended3} 
\begin{minipage}[t]{0.4\linewidth} 
\centering
  \vspace*{0.3pt}
  \def\temptablewidth{1\textwidth}
  {\rule{\temptablewidth}{0.4pt}}
  \begin{tabular*}{\temptablewidth}{@{\extracolsep{\fill}}ccc}
   \multirow{2}{*}{$j$}  & \multicolumn{2}{r}{improved OGA} \\ \cline{2-3}\cline{2-3}
   \multicolumn{1}{c}{} & $\eta_j$ & $c_j$  \\ \hline
   1 & 1.0e+00 & -1.6e-01 \\
   2 & 8.7e-01 & -5.1e-01 \\
   3 & 6.6e-01 &  1.1e+00 \\
   4 & 2.2e-01 & -8.2e-01 \\
   5 & 9.5e-01 &  4.4e-01 \\
   6 & 1.0e-08 &  2.7e-01 \\
   7 & 4.8e-01 &  1.8e+00 \\
  \end{tabular*}
  {\rule{\temptablewidth}{0.5pt}}
\end{minipage}
\hspace{0.05\textwidth}
\begin{minipage}[t]{0.4\linewidth} 
\centering
 
  \vspace*{0.3pt}
  \def\temptablewidth{1\textwidth}
  {\rule{\temptablewidth}{0.4pt}}
  \begin{tabular*}{\temptablewidth}{@{\extracolsep{\fill}}ccc}
   \multirow{2}{*}{$j$}  & \multicolumn{2}{r}{WCGA} \\ \cline{2-3}\cline{2-3}
   \multicolumn{1}{c}{} & $\eta_j$ & $c_j$ \\ \hline
   1 & 1.0e+00 &  3.2e-02\\
   2 & 9.7e-01 & -8.2e-02\\
   3 & 9.5e-01 & -8.0e-02\\
   4 & 9.4e-01 &  6.9e-02\\
   5 & 8.7e-01 &  1.2e-01\\
   6 & 9.3e-01 &  9.7e-02\\
   7 & 8.4e-01 & -6.4e-02\\
   8 & 7.6e-01 & -2.5e-01\\
   9 & 7.1e-01 &  5.0e-02\\
  10 & 6.7e-01 &  2.0e-01\\
  11 & 8.3e-01 & -1.0e-01\\
  12 & 6.3e-01 &  4.3e-01\\
  13 & 6.1e-01 &  5.7e-01\\
  \end{tabular*}
  {\rule{\temptablewidth}{0.5pt}}
\end{minipage}
\end{table}

\subsection{Preconditioning of the Darcy-Stokes problem}\label{app:pre}
Finally, we investigate the application of rational approximation on preconditioning of the Darcy-Stokes coupled model.
Instead of solving the problem using the preconditioner involving a fractional block directly, we apply the proposed greedy algorithms to approximate the inverse of the fractional operator with rational functions.

The Darcy-Stokes coupled problem models 
slow fluid flow in physical systems consisting of a free flow domain and a rigid porous media. Specifically, the Stokes equation is employed in the free flow domain, while Darcy’s law governs the flow in the porous region. 
Consider the Darcy-Stokes problem on a bounded domain $\Omega :=\Omega_S \bigcup \Omega_D$ with $\Omega_S, \Omega_D\subset \mathbb{R}^d, d=2,3$, and interface $\Gamma :=\partial \Omega_S \cap \partial \Omega_D$:
\begin{subequations}\label{DSpro}
\begin{alignat}{2}
-\nabla \cdot \boldsymbol{\sigma}_S\left(\boldsymbol{u}_S, p_S\right)  =\boldsymbol{f}_S \text {\quad and\quad} \nabla \cdot \boldsymbol{u}_S & =0 & \text { in } \Omega_S, \\
\boldsymbol{u}_S &=0 & \text { on } \partial \Omega_S^E,\\
\boldsymbol{\sigma}_S\left(\boldsymbol{u}_S, p_S\right) \cdot \boldsymbol{n}_S & =0 & \text { on } \partial \Omega_S^N,\\
\boldsymbol{u}_D  =-K\mu^{-1} \nabla p_D \text {\quad and\quad} \nabla \cdot \boldsymbol{u}_D & =f_D & \text { on } \Omega_D, \\
\boldsymbol{u}_D \cdot \boldsymbol{n}_D  &=0 & \text { on }  \partial \Omega_D^E,\\
p_D&=0 & \text { on }  \partial \Omega_D^N,
\label{DSproblem4}
\end{alignat}
\end{subequations}
where $\partial \Omega_S \backslash \Gamma=\partial \Omega_S^E \cup \partial \Omega_S^N,\left|\partial \Omega_S^i\right|>0$,  $\Gamma \cap \partial \Omega_S^E=\emptyset$, $\partial \Omega_D \backslash \Gamma=\partial \Omega_D^E \cup \partial \Omega_D^N,\left|\partial \Omega_D^j\right|>0$,  $\Gamma \cap \partial \Omega_D^N=\emptyset$, $i,j=E,N$. Here, $\sigma_S\left(\bm{u}_S, p_S\right)=2\mu \epsilon(\bm{u}_S)-p_S \mathcal{I}$ represents a stress tensor, with deformation rate tensor $\epsilon(\bm{u}_S):=\frac{1}{2}(\nabla \bm{u}_S+\nabla \bm{u}_S^T)$, $\bm{f}_S$ and $f_D$ denote the external forces, $\bm{u}_S$ and $p_S$ are the velocity and pressure in the fluid region $\Omega_S$, and $\bm{u}_D$ and $p_D$ are the velocity and pressure in the porous media $\Omega_D$. The kinematic viscosity of the fluid is $\mu>0$, and the Darcy permeability is $K>0$. Moreover, $\bm{n}_S$ and $\bm{n}_D$ are the outward unit vectors that are normal to the boundary $\partial \Omega_S$ and $\partial \Omega_D$, respectively.
The coupling of the two subdomains is achieved by imposing three interface conditions on $\Gamma$:
\begin{subequations}\label{con}
\begin{align}
\bm{u}_S \cdot \bm{n}-\bm{u}_D \cdot \bm{n} & =0, \\
\bm{n} \cdot \bm{\sigma}_S\left(\bm{u}_S, p_S\right) \cdot \bm{n}+p_D & =0, \\
\bm{n} \cdot \bm{\sigma}_S\left(\bm{u}_S, p_S\right) \cdot \bm{\tau}+\xi\bm{u}_S \cdot \bm{\tau} & =0,
\end{align}
\end{subequations}
where $\xi = \alpha_{BJS} \mu K^{-1 / 2}$ with the Beavers-Joseph-Saffman (BJS) coefficient $\alpha_{BJS}>0$. Additionally,   $\bm{n}$ is  defined as the unit normal on the interface pointing from the fluid region $\Omega_S$  towards the porous structure $\Omega_D$ and $\bm{\tau}$ represents a unit vector tangent to the interface. 


Let $H^s(\Omega)$ with real number $s \in(-1,1)$ be the Sobolev space defined via the spectral decomposition of the Laplacian. 
To derive the weak formulation associated with the Stokes-Darcy system, we introduce Lagrange multiplier $\lambda\in Q=\Lambda(\Gamma)$ with $\Lambda(\Gamma):=\mu^{-1 / 2} H^{-\frac{1}{2}}(\Gamma) \cap \mu^{-1 / 2}K^{1 / 2} H^{\frac{1}{2}}(\Gamma)$ at the interface.
Let $V_S$, $V_D$, $Q_S$, $Q_D$ be function spaces for the Stokes velocity, the Darcy flux, Stokes pressure, and Darcy pressure, respectively. Then, the variational formulation of the problem reads: Find $\left(\bm{u}_S, p_S, \bm{u}_D, p_D, \lambda\right) \in V_S \times Q_S \times V_D \times Q_D \times \Lambda$ such that 
$$
\left(\begin{array}{ccccc}
-\mu \nabla\cdot\epsilon+\xi T_t^{\prime} T_t & & -\nabla & & T_n^{\prime} \\
& \mu K^{-1} \mathcal{I} & & -\nabla & -T_n^{\prime} \\
 \nabla \cdot & & & &\\

& \nabla \cdot & & & \\
T_n & -T_n & &
\end{array}\right)\left(\begin{array}{c}
\bm{u}_S \\
\bm{u}_D \\
p_S \\
p_D \\
\lambda
\end{array}\right)=\left(\begin{array}{c}
\boldsymbol{f}_S \\
0 \\
0 \\
f_D \\
0
\end{array}\right),
$$
where the operator $ T_n$ and $ T_t $ represent the normal and tangential trace operators on the interface $\Gamma$, respectively.

Recently, a parameter-robust preconditioner has been constructed for solving the Darcy-Stokes problem in \cite{holter2020robust}, defined as
\begin{align}\label{DSpre}
    \mathcal{M}=\left(\begin{array}{lllll}
    -\mu \nabla\cdot\epsilon+\xi T_t^{\prime} T_t & & & & \\
    & \mu K^{-1}(\mathcal{I}-\nabla \nabla \cdot) & & & \\
    & & \mu^{-1} \mathcal{I} & & \\
    & &   &\mu^{-1}K\mathcal{I} & \\
    & & & & \mathcal{S}
    \end{array}\right)^{-1},
\end{align}
where the fractional operator $\mathcal{S}:=\mu^{-1}\left(-\Delta+\mathcal{I}\right)^{-1 / 2}+K \mu^{-1}\left(-\Delta+\mathcal{I}\right)^{1 / 2}$.
Therefore, we would like to apply rational approximation to approximate $\mathcal{S}^{-1}$ so that the inverse of the fractional operator turns into solving a series of shifted Laplacian problems.

In this example, we set the problem domain $\Omega=[0,2] \times[0,1]$ with $\Omega_S=[0,1) \times[0,1], \Omega_D=(1,2] \times[0,1]$, and $\alpha_{BJS}=1$.
The data $\bm{f}_S$ and $f_D$ are chosen so that the exact solution is given by
\begin{align*}
\bm{u}_S&=\left(\begin{array}{c}
(e^x-e)\cos (\pi y) \\
-e^x\sin (\pi y)/\pi
\end{array}\right), &\quad p_S=2e^x\cos (\pi y), \quad \text { in } \Omega_S ,\\
\bm{u}_D&=\left(\begin{array}{c}
-(e^x-e)\cos (\pi y) \\
\pi(e^x-ex) \sin (\pi y)
\end{array}\right), &\quad p_D=(e^x-ex) \cos (\pi y), \quad \text { in } \Omega_D .
\end{align*}
For the spatial discretization, we utilize the lowest order Crouzeix-Raviart space \cite{Rui2009A} as the velocity space. Piecewise constant space is applied for the pressure, and we consider a piecewise linear approximation for the Lagrange multiplier. 
Recalling the expression of the fractional operator $\mathcal{S}$, the target function  is given by 
\begin{align*}
f(z)&=\left(\mu^{-1}z^{-1/2}+  K\mu^{-1}z^{1/2}\right)^{-1},\\
&=\mu\left(z^{-1/2}+  Kz^{1/2}\right)^{-1}, \quad
z\in(0,\lambda_{\max}],
\end{align*}
where $\lambda_{\max}$ is the largest eigenvalue of discretization of the operator $-\Delta+\mathcal{I}$. 
Let $c$ be a constant such that $c\ge \lambda_{\max}$. We rescale the problem by considering 
\begin{align*}
\left(z^{-\frac{1}{2}}+K z^{\frac{1}{2}}\right)^{-1}&=\frac{1}{\gamma_0}\left(\frac{c^{-\frac{1}{2}}}{\gamma_0}\left(\frac{z}{c}\right)^{-\frac{1}{2}}+\frac{Kc^{\frac{1}{2}}}{\gamma_0}\left(\frac{z}{c}\right)^{\frac{1}{2}}\right)^{-1} \\
& =:\frac{1}{\gamma_0} \tilde{f}(\tilde{z}) \quad\text{ with $\tilde{z}:=\frac{z}{c}$, $  \tilde{z}\in(0,1]$,} 
\end{align*}
where $\gamma_0:=\max \{c^{-\frac{1}{2}}, Kc^{\frac{1}{2}}\}$. 
Then, the problem turns into finding a good rational approximation for the rescaled funtion of a scalar variable $\Tilde{f}(\tilde{z})$, $\tilde{z} \in(0,1]$. We set the residual tolerance of the proposed greedy algorithms for rational approximation to be $0.1$, as the rational approximation is used in a preconditioner which does not need to be very precise. The following results validate this choice. 

Table \ref{M1} reports the number of
iterations required by GMRES preconditioned with the operator $\mathcal{M}$. Tables \ref{OGA1}-\ref{WCGA1} list the relative changes in number of iterations by substituting the rational approximation for the fractional operator $S$ via the improved OGA and WCGA, respectively. For example, a value of +3 indicates an increase of 3 iterations compared to the one computed using fractional operator $S$ directly, while a value of -1 indicates a decrease of 1 iteration. 
Compared with inverting
fractional operator $\mathcal{S}$ directly, the approximations employing the improved OGA and WCGA show similar performance which are parameter robust with respect to the viscosity $\mu$, permeability $K$, and mesh size $h$. This demonstrates that our proposed algorithms exhibit good performance in preconditioning of the Darcy-Stokes problem.
Figure \ref{poles1} and Figure \ref{poles2} present the number of poles needed for rational approximation. 
The number varies from 2 to 15 for different cases. The average number of poles required by the OGA and WCGA is comparable for this example, with the OGA at approximately 7.16 and the WCGA at approximately 7.44. Moreover, we observe a trend that as $K$ decreases, the number of poles decreases accordingly, which makes the resulting algorithms appealing for solving complex problems. 

\begin{table}[h]
\renewcommand{\arraystretch}{1.1}
\centering
\begin{center}
\caption{Preconditioned GMRES iterations required for convergence using fractional operator $\mathcal{S}$ directly.}  \label{M1}
\begin{tabular}{c|ccccc|ccccc}
\hline & \multicolumn{5}{c|}{$\mu=1$} 
& \multicolumn{5}{c}{$\mu=10^{-2}$}
\\
\hline${K}\backslash h$ 
& $2^{-2}$ & $2^{-3}$ & $2^{-4}$ & $2^{-5}$ & $2^{-6}$ 
& $2^{-2}$ & $2^{-3}$ & $2^{-4}$ & $2^{-5}$ & $2^{-6}$\\
\hline 
1         &25 &25 &25 &26 &26 &29 &33 &35 &37 &37\\
$10^{-1}$ &25 &27 &28 &28 &28 &31 &33 &35 &37 &38\\
$10^{-2}$ &25 &27 &27 &28 &27 &31 &35 &35 &37 &39\\
$10^{-3}$ &21 &25 &25 &28 &29 &31 &35 &37 &39 &39\\
$10^{-4}$ &19 &19 &21 &22 &25 &29 &32 &35 &37 &38\\
\hline & \multicolumn{5}{c|}{$\mu=10^{-4}$} 
& \multicolumn{5}{c}{$\mu=10^{-6}$}
\\
\hline${K}\backslash h$ 
& $2^{-2}$ & $2^{-3}$ & $2^{-4}$ & $2^{-5}$ & $2^{-6}$
& $2^{-2}$ & $2^{-3}$ & $2^{-4}$ & $2^{-5}$ & $2^{-6}$\\
\hline 
1         &33 &38 &41 &45 &45 &39 &45 &49 &52 &55\\
$10^{-1}$ &35 &39 &41 &45 &47 &39 &47 &49 &54 &56\\
$10^{-2}$ &35 &41 &43 &45 &47 &39 &47 &51 &55 &55\\
$10^{-3}$ &37 &43 &47 &47 &47 &39 &49 &54 &55 &56\\
$10^{-4}$ &39 &45 &47 &49 &51 &43 &51 &56 &59 &59\\
\hline
\end{tabular}
\end{center}
\end{table}

\begin{table}[h]
\renewcommand{\arraystretch}{1.1}
\centering
\begin{center}
\caption{Preconditioned GMRES iterations required for convergence using the improved OGA.}   \label{OGA1}
\begin{tabular}{c|ccccc|ccccc}
\hline & \multicolumn{5}{c|}{$\mu=1$} 
& \multicolumn{5}{c}{$\mu=10^{-2}$}
\\
\hline${K}\backslash h$ 
& $2^{-2}$ & $2^{-3}$ & $2^{-4}$ & $2^{-5}$ & $2^{-6}$ 
& $2^{-2}$ & $2^{-3}$ & $2^{-4}$ & $2^{-5}$ & $2^{-6}$\\
\hline 
1         &+0 &+1 &+1 &+1 &+1 &-1 &-1 &+1 &+2 &+3\\
$10^{-1}$ &+1 &+2 &+1 &+1 &+1 &+0 &+4 &+3 &+3 &+3\\
$10^{-2}$ &-2 &+1 &+1 &-1 &+0 &-2 &+0 &+4 &+2 &+0\\
$10^{-3}$ &+2 &+4 &+0 &+2 &+0 &+1 &+4 &+2 &+0 &+2\\
$10^{-4}$ &+0 &+1 &+0 &+9 &+1 &+1 &+1 &+0 &+4 &+3\\
\hline & \multicolumn{5}{c|}{$\mu=10^{-4}$} 
& \multicolumn{5}{c}{$\mu=10^{-6}$}
\\
\hline${K}\backslash h$ 
& $2^{-2}$ & $2^{-3}$ & $2^{-4}$ & $2^{-5}$ & $2^{-6}$
& $2^{-2}$ & $2^{-3}$ & $2^{-4}$ & $2^{-5}$ & $2^{-6}$\\
\hline 
1         &+1 &-1 &+0 &+3 &+4 &+0 &+0 &+0 &+3 &+2\\
$10^{-1}$ &+0 &+2 &+4 &+3 &+4 &+0 &+2 &+4 &+3 &+3\\
$10^{-2}$ &+0 &+0 &+4 &+3 &+2 &+0 &+2 &+1 &+2 &+2\\
$10^{-3}$ &+0 &+2 &+0 &+2 &+2 &+2 &+4 &+1 &+2 &+2\\
$10^{-4}$ &-2 &+0 &+0 &+4 &+2 &+0 &+0 &+0 &+2 &+4\\
\hline
\end{tabular}
\end{center}
\end{table}

\begin{table}[h]
\renewcommand{\arraystretch}{1.1}
\centering
\begin{center}
\caption{Preconditioned GMRES iterations required for convergence using the WCGA.}   \label{WCGA1}
\begin{tabular}{c|ccccc|ccccc}
\hline & \multicolumn{5}{c|}{$\mu=1$} 
& \multicolumn{5}{c}{$\mu=10^{-2}$}
\\
\hline${K}\backslash h$ 
& $2^{-2}$ & $2^{-3}$ & $2^{-4}$ & $2^{-5}$ & $2^{-6}$ 
& $2^{-2}$ & $2^{-3}$ & $2^{-4}$ & $2^{-5}$ & $2^{-6}$\\
\hline 
1         &+0 &+2 &+1 &+1 &+1 &+1 &-1 &+1 &+3 &+4\\
$10^{-1}$ &+2 &+2 &+1 &+1 &+1 &+0 &+3 &+3 &+3 &+3\\
$10^{-2}$ &+0 &+1 &+1 &+1 &+0 &+0 &+0 &+4 &+4 &+2\\
$10^{-3}$ &+2 &+2 &+1 &+0 &+1 &+1 &+2 &+3 &+0 &+0\\
$10^{-4}$ &+1 &+2 &+0 &+0 &+0 &+3 &+2 &+0 &-2 &+1\\
\hline & \multicolumn{5}{c|}{$\mu=10^{-4}$} 
& \multicolumn{5}{c}{$\mu=10^{-6}$}
\\
\hline${K}\backslash h$ 
& $2^{-2}$ & $2^{-3}$ & $2^{-4}$ & $2^{-5}$ & $2^{-6}$
& $2^{-2}$ & $2^{-3}$ & $2^{-4}$ & $2^{-5}$ & $2^{-6}$\\
\hline 
1         &+1 &+1 &+0 &+3 &+4 &+0 &+2 &+0 &+3 &+4\\
$10^{-1}$ &+0 &+2 &+3 &+4 &+4 &+0 &+0 &+4 &+3 &+3\\
$10^{-2}$ &+0 &+0 &+4 &+4 &+2 &+0 &+1 &+4 &+3 &+3\\
$10^{-3}$ &+0 &+2 &+2 &+0 &+0 &+2 &+2 &+3 &+1 &-1\\
$10^{-4}$ &+0 &+2 &+2 &-2 &+0 &+0 &+2 &+2 &-2 &+2\\
\hline
\end{tabular}
\end{center}
\end{table}

\begin{figure}[h]
        \centering
        \includegraphics[width=0.8\textwidth]{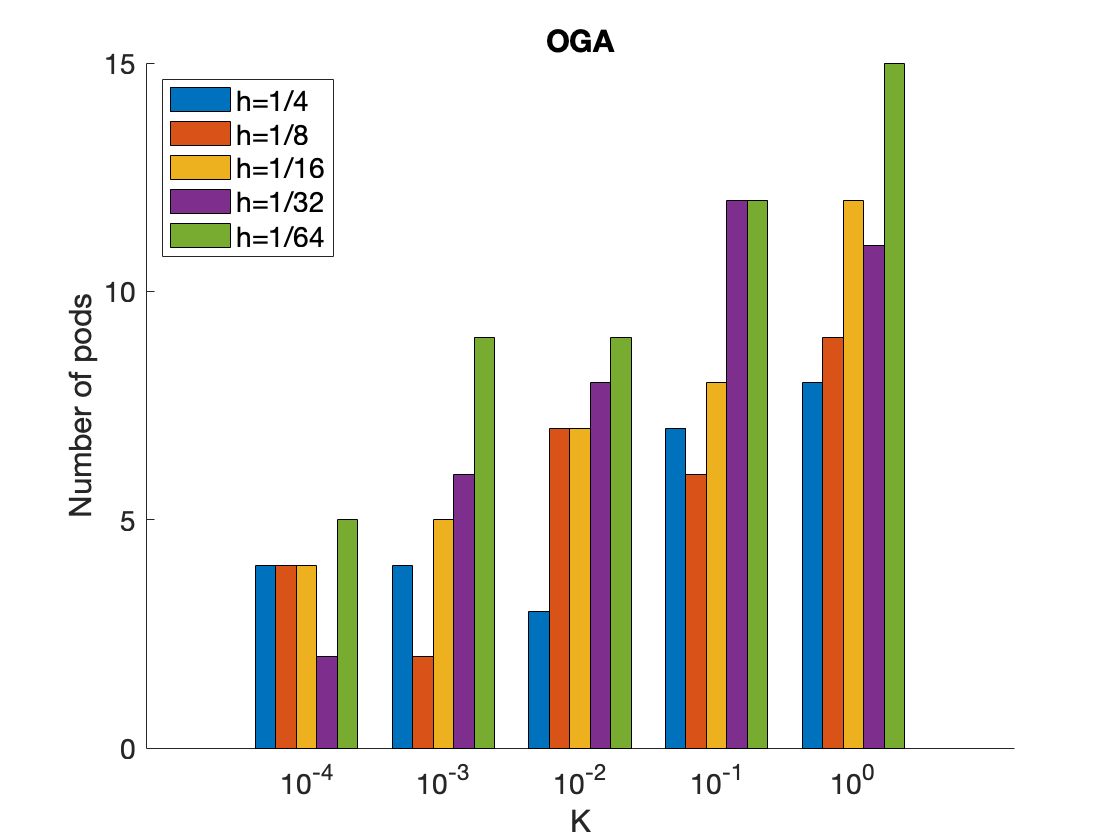}
    \caption{Number of poles needed for the improved OGA applied to the Darcy-Stokes problem.}\label{poles1}
\end{figure}

\begin{figure}[h]
        \centering
        \includegraphics[width=0.8\textwidth]{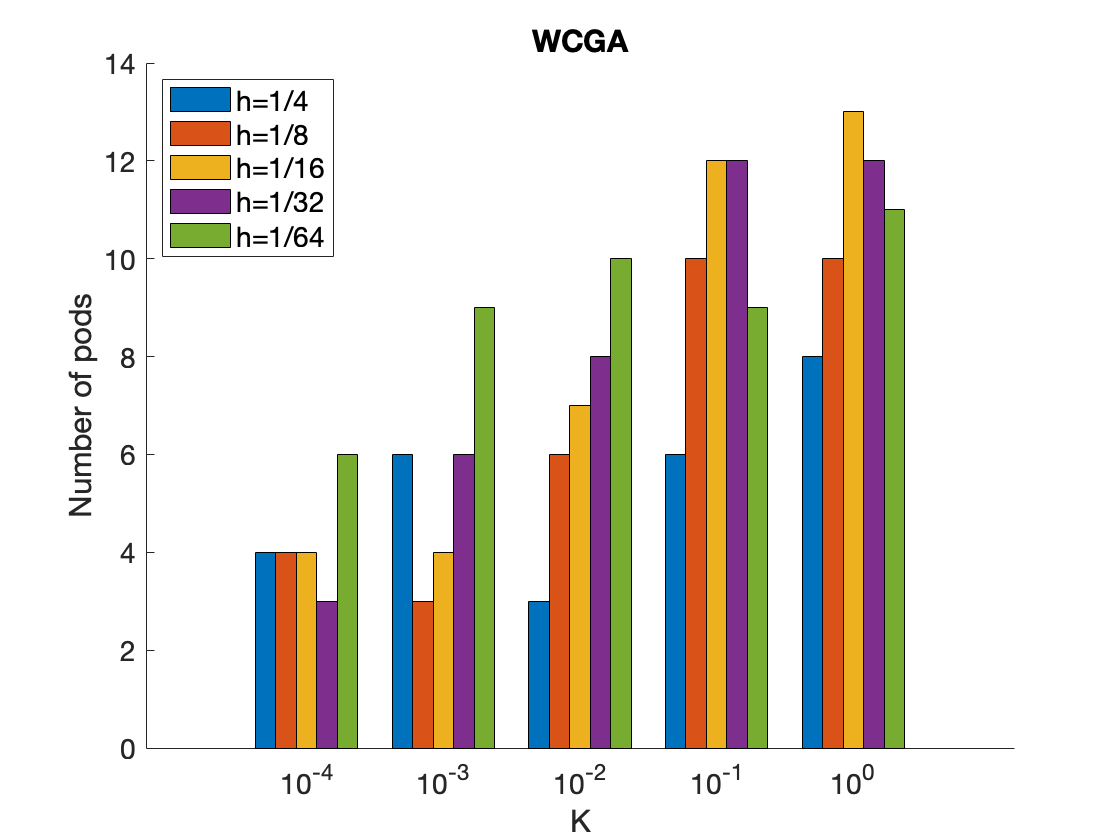}
    \caption{Number of poles needed for the WCGA applied to the Darcy-Stokes problem.}\label{poles2}
\end{figure}
\section{Conclusions}\label{sec:conc}
In this paper, two greedy algorithms are presented for approximating a general  function of a SPD operator. The first method improves the orthogonal greedy algorithm studied in \cite{li2024reduced} by adding a minimization step in the uniform norm after taking an orthogonal projection onto the space spanned by candidates. The second approach is a direct application of the weak Chebyshev greedy algorithm in the uniform norm. Compared with existing algorithms for rational approximation in the uniform norm, the proposed greedy algorithms always guarantee negative poles, keeping the resulting shifted Laplace operators SPD.
Numerical results illustrate that the proposed greedy algorithms are promising for rational approximation and preconditioning of the Darcy-Stokes problem.
Moreover, the proposed algorithms exhibit flexibility and applicability in the sense that their dictionaries can be expanded to consider other approximation problems. 
Also, the WCGA is more practical and efficient for approximation when high accuracy is not a crucial concern.
Our next step is to apply the proposed greedy algorithms to develop preconditioners for other multiphysics problems.
Further investigation into the convergence behavior of the proposed greedy algorithms are necessary for a deeper understanding. 




\bibliographystyle{siamplain}
\bibliography{references}
\end{document}